\newtheorem{theorem}{Theorem}[section]
\newtheorem{lemma}[theorem]{Lemma}
\newtheorem{corollary}[theorem]{Corollary}
\theoremstyle{remark}
\newtheorem{remark}[theorem]{Remark}
\title[On minimal non-$CL$-groups]{On minimal non-$CL$-groups}
\author[D.E. Otera]{Daniele Ettore Otera}
\address{D\'epartement de Math\'ematique\\
Batiment 425, Facult\'e de Science d'Orsay\\
Universit\'e Paris-Sud 11\\
F-91405, Orsay Cedex, France} 
\email{daniele.otera@math-psud.fr}
\author[F.G. Russo]{Francesco G. Russo}
\address{Mathematics Department\\
University of Palermo\\
via Archirafi 14\\
90123, Palermo, Italy}
\email{francescog.russo@yahoo.com}
\begin{document}

\keywords{$CL$-groups, minimal non-$CL$-groups, locally graded groups, locally finite groups, Chernikov layers.}
\subjclass[2010]{20F24, 20F15, 20E34, 20E45.}

\begin{abstract}
If $m$ is a positive integer or infinity, the $m$-layer (or briefly, the layer) of a group $G$ is the subgroup $G_m$ generated by all elements of $G$ of order $m$. This notion goes back to some contributions of R. Baer and Ya.D. Polovickii of almost 60 years ago and is often investigated, because the presence of layers influences the group structure. If $G_m$ is finite for all $m$, $G$ is called  $FL$-group (or $FO$-group). A generalization is given by $CL$-groups, that is, groups in which $G_m$ is a Chernikov group for all $m$. By working on the notion of $CL$-group instead of that of $FL$-group, we extend a recent result of Z. Zhang, describing the structure of a group which is not a $CL$-group, but all whose proper subgroups are $CL$-groups.
\end{abstract}

\maketitle

\section{Introduction}

A group $G$ has
\textit{Chernikov conjugacy classes}, or briefly is a
 \textit{CC-group}, if $G/C_{G}(\langle x\rangle^G)$
is a Chernikov group for all $x\in
G$, where $C_G(\langle x \rangle^G)$ denotes the centralizer of the normal closure $\langle x \rangle^G$ of $\langle x \rangle$ in $G$. These groups were
introduced by Ya.D. Polovickii  in \cite{polovickii} and generalize the groups with \textit{finite conjugacy classes}, also known as \textit{FC-groups}. A classic reference is \cite{tomkinson} for the study of $FC$-groups. Among these groups, there is a special subclass,  which has received attention in \cite{zhang} and will be the subject of the present work.

From \cite[pp. 133--134]{robinson} we recall that, if $m$ is a positive integer or $\infty$, the \textit{m-layer} (or briefly, the \textit{layer}) of a group $G$ is the subgroup $G_m$ generated by all elements of $G$ of order $m$. If $G_m$ is finite for all $m$, $G$ is called  \textit{FL-group} (or \textit{FO-group}). An $FL$-group is characterized to have only a finite number of elements of each order, including $\infty$. This justifies the terminology $FO$-group, used by some authors. Of course, $FL$-groups are $FC$-groups, but their structure can be described more accurately with respect to that of $FC$-groups. In order to do this, we recall some notions from \cite{robinson}. Following \cite[p.135]{robinson}, a direct product of groups is called \textit{prime-thin} if for each prime $p$ at most a finite number of the direct factors contain elements of order $p$. Successively, we recall that a group $G$ is \textit{central-by-finite}, if its center $Z(G)$ has finite index in $G$. A group $G$ is said to be \textit{locally normal and finite} if each finite subset of $G$ is contained in a finite normal subgroup of $G$. Similarly, a group $G$ is said to be \textit{locally normal and Chernikov} if each finite subset of $G$ is contained in a Chernikov normal subgroup of $G$. Finally, we recall that a group $G$ satisfies min-$ab$, if it satisfies the minimal condition on its abelian subgroups.

Now we are able to state the main characterizations of $FL$-groups.

\begin{theorem}[See \cite{robinson}, Theorem 4.43]\label{t:1}
The following properties of a group $G$ are equivalent.
\begin{itemize}
\item[(i)] $G$ is an $FL$-group.
\item[(ii)]   $G$ is a locally normal and finite group and each Sylow subgroup satisfies min-$ab$.
\item[(iii)]   $G$ is isomorphic with a subgroup of a prime-thin direct product of central-by-finite Chernikov groups.
\end{itemize}
\end{theorem}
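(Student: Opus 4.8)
My plan is to prove the three conditions equivalent through the cycle $(iii)\Rightarrow(i)\Rightarrow(ii)\Rightarrow(iii)$, so that no implication is circular and the genuinely structural content is isolated at the end; the first two implications are verifications, and the weight falls on the last one.

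For $(iii)\Rightarrow(i)$ I would show that the class of $FL$-groups is closed under the three operations appearing in (iii). Passing to subgroups is immediate, since an element of order $m$ in $H\le G$ has order $m$ in $G$ and hence $H_m\le G_m$. To see that a central-by-finite Chernikov group $C$ is an $FL$-group, I note that its finite residual $D$ is a direct product of finitely many Pr\"ufer groups; as a divisible group has no proper subgroup of finite index while $D\cap Z(C)$ has finite index in $D$ (because $[C:Z(C)]<\infty$), we get $D\le Z(C)$. Centrality yields $(cd)^m=c^m d^m$ for $d\in D$, so the elements of order dividing $m$ in a fixed coset $cD$ are exactly the solutions of $d^m=c^{-m}$, of which there are at most $|D[m]|<\infty$; summing over the finitely many cosets of $D$ bounds the number of elements of each order. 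Finally, in a prime-thin direct product $\prod_i G_i$ of $FL$-groups, an element of order $m$ must, for every prime $p\mid m$, have a coordinate containing an element of order $p$, so prime-thinness confines the supports of all elements of order $m$ to one fixed finite set of indices, reducing the count to the trivial case of a finite product.

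For $(i)\Rightarrow(ii)$ I would first observe that an $FL$-group is periodic, since $G_\infty$ is finite and hence trivial. Each layer $G_m$ is characteristic and finite, so a finite subset $\{x_1,\dots,x_k\}$ with $|x_i|=m_i$ lies in the finite normal subgroup $G_{m_1}\cdots G_{m_k}$; thus $G$ is locally normal and finite. For the Sylow condition I fix a Sylow $p$-subgroup $P$ and an abelian $A\le P$: its socle $A[p]$ is contained in $\{1\}\cup G_p$ and so is finite, whence $A$ is a direct sum of finitely many cyclic and Pr\"ufer $p$-groups and satisfies the minimal condition on all of its subgroups. Since every descending chain of abelian subgroups of $P$ lies inside its (abelian, hence min-satisfying) first term, $P$ satisfies min-$ab$.

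The implication $(ii)\Rightarrow(iii)$ carries the real difficulty. The first move is to upgrade the hypothesis by the theorem of \v{S}unkov---a locally finite group with min-$ab$ is Chernikov---applied to each Sylow subgroup, so that all Sylow subgroups of $G$ are Chernikov. Next I exploit the $FC$-structure: every element lies in a finite normal subgroup $F$, the map $g\mapsto gC_G(F)$ realizes $G/C_G(F)$ as a subgroup of the finite group $\mathrm{Aut}(F)$, and since $\bigcap_F C_G(F)=Z(G)$ the quotient $G/Z(G)$ embeds in a direct product of finite groups. The center itself is abelian with Chernikov primary components, so $Z(G)=\bigoplus_p Z_p$ embeds in the prime-thin product $\prod_p Z_p$ of abelian---hence central-by-finite---Chernikov groups. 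The delicate, effort-consuming steps are to splice the quotient data and the central data into a single embedding and, above all, to organise the family of factors so that the final product is prime-thin; at this stage the finiteness of the layers is not yet available, so I would have to extract the prime-thin bound directly from the Chernikov structure of the Sylow subgroups and of the primary components, controlling for each prime $p$ the factors that can contain an element of order $p$. This organisational point is the crux of the whole theorem.
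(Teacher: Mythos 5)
Your cycle $(iii)\Rightarrow(i)\Rightarrow(ii)\Rightarrow(iii)$ is structured sensibly, and the two implications you actually carry out are essentially correct; but the proof has a genuine gap, and it is exactly where you say the weight falls. (Note first that the paper itself offers no proof to compare against: Theorem \ref{t:1} is quoted from Robinson \cite{robinson} as Theorem 4.43, so your argument has to stand on its own.) The implication $(ii)\Rightarrow(iii)$ is not proved: you assemble some ingredients (\v{S}unkov applied to the Sylow subgroups, the embedding of $G/C_G(F)$ into $\mathrm{Aut}(F)$ for finite normal $F$, the primary decomposition of $Z(G)$) and then explicitly defer the ``splicing'' of the central and quotient data and the verification of prime-thinness, which you yourself call the crux. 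Deferring the crux is not a proof. Worse, the plan as sketched runs into a structural obstruction, not a mere organisational one: from embeddings of $Z(G)$ and of $G/Z(G)$ into nice products one cannot, by any generic argument, manufacture an embedding of $G$, because a central extension need not embed into any direct product built from its center and its central quotient --- the quaternion group $Q_8$, with center of order $2$ and central quotient $C_2\times C_2$, embeds into no direct product of elementary abelian $2$-groups. So the extension problem is precisely what must be solved, and it requires a construction that works with the group $G$ itself (in Robinson's treatment, with the layer/locally normal structure of $G$, producing quotients with controlled prime content and trivial intersection of kernels), not with the pair $\bigl(Z(G),\,G/Z(G)\bigr)$. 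In addition, your embedding of $G/Z(G)$ lands in a Cartesian (unrestricted) product of finite groups, which is not yet of the shape demanded by (iii): restricted, prime-thin, with central-by-finite Chernikov factors.

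Two smaller points on the parts you did complete. In $(iii)\Rightarrow(i)$ you pass silently from ``finitely many elements of each order'' to ``each layer $G_m$ is finite''; this needs Dietzmann's lemma (a finite, conjugation-closed set of elements of finite order generates a finite subgroup), or at least an appeal to the characterization of $FL$-groups recorded in the paper's introduction --- say so. You also need ``direct product'' to mean the \emph{restricted} product, as in Robinson's notation: in the Cartesian product, prime-thinness still forces elements of finite order to have finite support (your argument), but there would be elements of infinite order built from coordinates of unbounded prime orders, and $(iii)\Rightarrow(i)$ would be false. With those caveats, $(iii)\Rightarrow(i)$ and $(i)\Rightarrow(ii)$ --- including the correct reduction of min-$ab$ to finiteness of the socle of an abelian subgroup of a Sylow $p$-subgroup --- are sound; the theorem, however, remains unproved until $(ii)\Rightarrow(iii)$ is actually carried out.
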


A group $G$ in which  $G_m$ is a Chernikov group for all $m$ is said to be a $CL$-group. Of course, $FL$-groups are $CL$-groups. There is not a rich  literature  in English language on $FL$-groups and $CL$-groups and \cite{bgm} may help the reader, who is interested to investigate the relations among ascending chains of $CC$-groups and the structure of $CL$-groups. However, weakening Theorem \ref{t:1}, $CL$-groups may be characterized analogously.

\begin{theorem}[See \cite{robinson}, Theorem 4.42]\label{t:2}
The following properties of a group $G$ are equivalent.
\begin{itemize}
\item[(i)] $G$ is a $CL$-group.
\item[(ii)]   $G$ is a locally normal and Chernikov group and each Sylow subgroup satisfies min-$ab$.
\item[(iii)]   $G$ is isomorphic with a subgroup of a prime-thin direct product of Chernikov groups.
\end{itemize}
\end{theorem}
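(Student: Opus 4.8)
The plan is to establish the cycle of implications (iii)$\Rightarrow$(i)$\Rightarrow$(ii)$\Rightarrow$(iii), running parallel to the proof of Theorem \ref{t:1} but systematically replacing ``finite'' by ``Chernikov.'' This replacement is legitimate because the class $\mathfrak{C}$ of Chernikov groups shares the closure properties of the class of finite groups used there: $\mathfrak{C}$ is closed under subgroups, homomorphic images and extensions (hence under finite direct products and under products of finitely many normal Chernikov subgroups), every member of $\mathfrak{C}$ is periodic, and a locally finite group belongs to $\mathfrak{C}$ precisely when it satisfies the minimal condition on subgroups. I would record these facts first, since they are invoked repeatedly.

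For (iii)$\Rightarrow$(i), suppose $G\le D=\mathrm{Dr}_{i\in I}C_i$ with each $C_i$ Chernikov and $D$ prime-thin. Fix $m$ and let $x=(c_i)_{i\in I}$ have order $m$; then every $c_i$ has order dividing $m$, so a nontrivial $c_i$ forces $C_i$ to contain an element of some prime order $p\mid m$. By prime-thinness the set $S$ of indices $i$ for which $C_i$ has an element of order $p$ for some $p\mid m$ is finite, and every order-$m$ element is supported on $S$. Hence $D_m\le\mathrm{Dr}_{i\in S}C_i$, a finite direct product of Chernikov groups and therefore itself Chernikov; so $G_m\le D_m$ is Chernikov and $G$ is a $CL$-group.

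For (i)$\Rightarrow$(ii), note first that each layer $G_m$ is a characteristic subgroup, hence normal and, by hypothesis, Chernikov. Moreover $G$ is periodic, because $G_\infty$, being Chernikov and hence periodic, can contain no element of infinite order and so is trivial; thus any finite subset meets only finitely many layers and lies in a product of finitely many normal Chernikov subgroups, which is again normal and Chernikov. This shows that $G$ is locally normal and Chernikov, and in particular locally finite, so Sylow subgroups exist. If an abelian subgroup $A$ of a Sylow $p$-subgroup failed the minimal condition, its socle $A[p]$ would be infinite, placing an infinite elementary abelian subgroup inside $G_p$ and contradicting that the Chernikov group $G_p$ satisfies min; hence every abelian $p$-subgroup is Chernikov and each Sylow subgroup satisfies min-$ab$.

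The main obstacle is (ii)$\Rightarrow$(iii), the embedding of $G$ into a prime-thin direct product of Chernikov groups. Here I would imitate the $FC$-theoretic construction used for $FL$-groups: starting from a directed local system of normal Chernikov subgroups with union $G$, I would manufacture a separating family of homomorphisms of $G$ onto Chernikov groups, so that the diagonal map embeds $G$ into their Cartesian product, and then exploit the min-$ab$ condition on Sylow subgroups to confine each prime to only finitely many essential factors, upgrading the Cartesian product to a prime-thin direct product. The delicate point — and the reason the target factors are merely Chernikov rather than the central-by-finite Chernikov factors of Theorem \ref{t:1} — is that the automorphism group of a Chernikov normal subgroup need not itself be Chernikov, so one cannot simply pass to quotients by centralizers as in the finite case. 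Controlling the conjugation action so as to keep the quotients within $\mathfrak{C}$ is exactly where the Chernikov-conjugacy ($CC$-group) structure underlying the hypothesis must be used, and this is the step I expect to demand the most care.
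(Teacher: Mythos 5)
The first thing to note is that the paper offers no proof of Theorem \ref{t:2} at all: like Theorem \ref{t:1}, it is quoted verbatim from Robinson's book (\cite{robinson}, Theorem 4.42). So there is no internal argument to compare yours against; your attempt has to stand or fall as a self-contained proof, and as it stands it falls short. The parts you carry out are sound: (iii)$\Rightarrow$(i) is correct (add one line for the layer $G_\infty$, which is trivial because a restricted direct product of Chernikov groups is periodic), and (i)$\Rightarrow$(ii) is correct, the only compressed point being the claim that an abelian $p$-group failing min has infinite socle --- true, since an abelian $p$-group is an essential extension of its socle and hence embeds in the socle's injective hull, which is a finite direct sum of quasicyclic groups whenever the socle is finite.

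The genuine gap is (ii)$\Rightarrow$(iii), which you do not prove: you describe a plan and then explicitly defer the decisive step (``this is the step I expect to demand the most care''). Concretely, three things are missing. First, the ``separating family of homomorphisms of $G$ onto Chernikov groups'' is never constructed. The natural candidates, the quotients $G/C_G(N)$ with $N$ running over a local system of normal Chernikov subgroups, require a genuine theorem in order to be Chernikov --- namely that a periodic group of automorphisms of a Chernikov group is Chernikov; $\mathrm{Aut}(N)$ itself can be uncountable (for a quasicyclic $p$-group it is the group of units of the $p$-adic integers), as you yourself observe. Second, even granting that, these kernels intersect in $Z(G)$, not in the trivial subgroup (an element centralizing every member of a covering family of normal subgroups is central), so the diagonal map is not injective and the center has to be split off and embedded separately. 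Third, a separating family only yields an embedding into the Cartesian (unrestricted) product; passing to a restricted, prime-thin direct product is precisely where min-$ab$ on the Sylow subgroups must do its work, and no mechanism for this is indicated. Since the cycle (iii)$\Rightarrow$(i)$\Rightarrow$(ii)$\Rightarrow$(iii) is broken at its hardest link, the theorem is not established; for the paper's purposes the honest options are to cite Robinson's Theorem 4.42, as the authors do, or to supply the missing construction in full.
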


If $\mathcal{X}$ is an arbitrary class of groups, $G$ is said to be
a \textit{minimal non}-$\mathcal{X}$-\textit{group}, or briefly an $MNX$-group, if it is not an
$\mathcal{X}$-group but all of whose proper subgroups are
$\mathcal{X}$-groups. Many results have been obtained on $MNX$-groups, for various choices of $\mathcal{X}$. 
If $\mathcal{X}$ is the class of $FC$-groups, we find the $MNFC$-groups characterized by V.V. Belyaev and N.F. Sesekin  in  \cite[Section
8]{tomkinson}.  They proved that an $MNFC$-group
is a finite cyclic extension of a divisible $p$-group of finite rank ($p$ a prime).
If $\mathcal{X}$ is the class of $CC$-groups, J. Ot\'{a}l and J. M. Pe\~{n}a proved in \cite[Theorem, p.1232]{otal-pena} that there are no $MNCC$-groups which have a non-trivial finite or abelian factor group. Similar subjects have been investigated in \cite{bruno-phillips, kp, leinen, russo-trabelsi, shum-zhang}.
More recently, Z. Zhang choose $\mathcal{X}$ to be the class of $FL$-groups,  proving in \cite[Theorem 2.5]{zhang} that all $MNFL$-groups are $MNFC$-groups. Consequently,  these groups may be described by the quoted classification of $MNFC$-groups.

In this paper we extend the results of Z. Zhang to $MNCL$-groups. In Section 2 we show that all $MNCL$-groups are $MNCC$-groups and this allows us to reduce the classification of $MNCL$-groups to that in \cite{otal-pena}. In Section 3 we characterize $MNCL$-groups and draw some conclusions on the perfect case.

\section{$MNCL$-groups}

An easy consequence of Theorem \ref{t:2} is listed below.

\begin{corollary}[See \cite{robinson}, p.134]\label{c:1}
$CL$-groups are countable and locally finite. 
\end{corollary}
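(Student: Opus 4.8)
The plan is to derive both assertions from Theorem \ref{t:2}, using as the basic building block the standard fact that every Chernikov group is itself countable and locally finite: a Chernikov group is, up to a subgroup of finite index, a finite direct product of Pr\"ufer groups, and both countability and local finiteness are preserved under finite direct products and finite extensions.

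First I would establish local finiteness. Let $S$ be a finite subset of $G$. By the equivalence of (i) and (ii) in Theorem \ref{t:2}, $G$ is locally normal and Chernikov, so $S$ is contained in a Chernikov normal subgroup $N$ of $G$. Then $\langle S\rangle\le N$, and since $N$ is locally finite the finitely generated subgroup $\langle S\rangle$ is finite. As $S$ was an arbitrary finite subset, $G$ is locally finite.

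Next I would deduce periodicity and hence a countable layer decomposition. Because $G$ is locally finite, every $g\in G$ has finite order, say $m=o(g)\ge 1$; being itself an element of order $m$, the element $g$ lies in the layer $G_m$. Therefore $G=\bigcup_{m=1}^{\infty}G_m$ as a set. Each $G_m$ is a Chernikov group by the definition of a $CL$-group, hence countable, and the index set $\{m : m\ge 1\}$ is countable. Since a countable union of countable sets is countable, it follows that $G$ is countable.

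The argument is short, and the only genuine input is the pair of structural facts that Chernikov groups are countable and locally finite; these serve as the base case, so I do not expect a real obstacle here. The one idea worth isolating is the decomposition $G=\bigcup_m G_m$: local finiteness forces $G$ to be a torsion group, after which expressing $G$ as the countable union of its layers converts the countability of each individual layer into the countability of the whole group. (Alternatively, periodicity can be seen at once from the fact that an element of infinite order would lie in $G_{\infty}$, which is Chernikov and therefore torsion, a contradiction.)
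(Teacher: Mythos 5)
Your proof is correct and follows exactly the route the paper intends: the corollary is stated as an easy consequence of Theorem \ref{t:2} (citing Robinson, p.~134), and your argument---local finiteness via the locally normal and Chernikov property from Theorem \ref{t:2}(ii), and countability via the decomposition $G=\bigcup_m G_m$ into countably many Chernikov (hence countable) layers---is the standard way to fill in that citation. No gaps; both the use of local finiteness (or, alternatively, of $G_\infty$ being Chernikov) to force periodicity and the countable-union argument are sound.
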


The structure of a $CC$-group is well--known and described in \cite[Theorem 4.36]{robinson}. A consequence, which we will use in several arguments, is expressed below.

\begin{corollary}[See \cite{otal-pena}, p. 1234] \label{c:2}
The set of all elements of finite order of a $CC$-group $G$ is a locally normal and Chernikov characteristic subgroup of $G$. In particular, a periodic $CC$-group is a locally normal and Chernikov group. 
\end{corollary}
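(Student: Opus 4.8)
The plan is to prove the statement in its stronger ``in particular'' form first and then read off the general case. Write $T$ for the set of all elements of finite order of $G$. Since every automorphism of $G$ preserves the order of each element, $T$ is an automorphism-invariant subset; hence, as soon as we know that $T$ is a subgroup, it is automatically characteristic. Moreover $T$ inherits the $CC$-property, because the class of $CC$-groups is subgroup-closed: if $H\leq G$ and $x\in H$, then $C_H(\langle x\rangle^H)\supseteq H\cap C_G(\langle x\rangle^G)$, so $H/C_H(\langle x\rangle^H)$ is a quotient of a subgroup of the Chernikov group $G/C_G(\langle x\rangle^G)$ and is therefore Chernikov. Thus it suffices to establish two things: (a) $T$ is closed under multiplication; and (b) every finite subset of $T$ lies in a Chernikov normal subgroup of $T$. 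Both will follow from the key lemma that the normal closure in $G$ of a single element of $T$ is a Chernikov group.

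For the key lemma, fix $x\in T$ and set $N=\langle x\rangle^G$. Then $N\cap C_G(\langle x\rangle^G)=C_N(N)=Z(N)$, so $N/Z(N)$ embeds into the Chernikov group $G/C_G(\langle x\rangle^G)$ and is itself Chernikov. By the Chernikov analogue of Schur's theorem, namely that a central-by-Chernikov group has Chernikov commutator subgroup, the subgroup $N'$ is Chernikov. Since $N$ is generated by conjugates of $x$, all of the same finite order, the abelian group $N/N'$ is generated by elements of bounded order and is periodic; as $N'$ is periodic too, $N$ is periodic. At this point the structure theory enters: by \cite[Theorem 4.36]{robinson} the periodic $CC$-group $N$ is Chernikov. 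I would then deduce (a) and (b) formally. For (a), if $x,y\in T$ then $\langle x\rangle^G\langle y\rangle^G$ is a product of two normal Chernikov subgroups of $G$, hence Chernikov and in particular periodic, so $xy\in T$; thus $T$ is a subgroup. For (b), given a finite set $F\subseteq T$, the normal closure $\langle F\rangle^T=\prod_{f\in F}\langle f\rangle^T$ is a product of finitely many normal subgroups of $T$, each contained in the Chernikov group $\langle f\rangle^G$ and hence Chernikov, so $\langle F\rangle^T$ is a Chernikov normal subgroup of $T$ containing $F$. This is exactly the assertion that $T$ is locally normal and Chernikov, and the ``in particular'' clause is the special case $G=T$.

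The main obstacle is the step that invokes \cite[Theorem 4.36]{robinson}, namely that the periodic group $N=\langle x\rangle^G$ is genuinely Chernikov. This is \emph{not} a formal consequence of ``$N/Z(N)$ is Chernikov'': an abelian periodic $N$ already has $N/Z(N)=1$ while being as large as an infinite elementary abelian group, and indeed $T$ itself need not be Chernikov but only locally so. What forces the normal closure of one element to be Chernikov is not the $CC$-condition at $x$ alone but the $CC$-condition at the conjugating elements as well: heuristically, if the conjugates of $x$ spanned a section of infinite rank, then some conjugating element $g$ would have $[\,N,g\,]$ infinite, in conflict with the requirement that $G/C_G(\langle g\rangle^G)$ be Chernikov. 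Converting this heuristic into a proof is precisely the content of Polovickii's structure theorem recorded as \cite[Theorem 4.36]{robinson}, which I would quote rather than reprove; everything else reduces to the standard closure properties of the classes of Chernikov groups and of $CC$-groups used above.
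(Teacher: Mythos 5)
The paper offers no internal proof of this statement at all: it is imported from \cite[p.~1234]{otal-pena} as a known consequence of Polovickii's structure theorem for $CC$-groups \cite[Theorem 4.36]{robinson}, so there is nothing to compare line by line; your proposal must be judged on its own merits. On those merits it is essentially sound, and it ultimately rests on the same pillar as the paper's citation, namely Polovickii's theorem. Your reductions are correct: the class of $CC$-groups is subgroup-closed by the centralizer comparison you give; once the key lemma (the normal closure $\langle x\rangle^G$ of a torsion element of a $CC$-group is Chernikov) is in hand, closure of $T$ under multiplication follows because $\langle x\rangle^G\langle y\rangle^G$ is a product of two normal Chernikov subgroups of $G$, hence Chernikov and in particular periodic; local normality follows because $\langle F\rangle^T=\prod_{f\in F}\langle f\rangle^T$ is a finite product of normal Chernikov subgroups of $T$; and the characteristic property is automatic once $T$ is a subgroup, since automorphisms preserve orders. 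These steps use only standard closure properties of Chernikov groups (subgroups, quotients, finite products of normal subgroups).

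The one genuine blemish is inside your proof of the key lemma. The sentence ``by \cite[Theorem 4.36]{robinson} the periodic $CC$-group $N$ is Chernikov'' is false as a general implication: an infinite elementary abelian $p$-group is a periodic $CC$-group (being abelian) that is not Chernikov, and indeed $T$ itself is generally not Chernikov but only locally so. To your credit, you catch exactly this in your final paragraph and repair it by declaring that what you actually quote is Polovickii's theorem in the form ``the normal closure of a torsion element of a $CC$-group is Chernikov'' --- the $CC$-analogue of Dietzmann's lemma --- rather than deducing Chernikovness from ``$N$ is a periodic $CC$-group.'' With that repair the argument is correct, but note that it renders your Schur-type preparation ($N/Z(N)$ Chernikov, hence $N'$ Chernikov, hence $N$ periodic) redundant: once Theorem 4.36 is invoked at full strength, those steps do no work. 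Since the paper itself treats the entire corollary as citable background, quoting Polovickii's theorem for the key lemma is a fair move; just state it in the form you need rather than through the false ``periodic $CC$ implies Chernikov'' paraphrase.
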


Torsion-free groups should be avoided in our investigations.

\begin{lemma}\label{l:1}
Let $G$ be an $MNCL$-group. Then $G$ is periodic.
\end{lemma}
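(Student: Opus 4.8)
The plan is to prove the lemma by contradiction, so suppose $G$ is an $MNCL$-group that is not periodic. Then $G$ contains an element $g$ of infinite order. My first step is to exploit the defining property of an $MNCL$-group: every proper subgroup of $G$ is a $CL$-group, hence by Corollary~\ref{c:1} every proper subgroup is countable and locally finite. But a locally finite group is periodic, so every proper subgroup of $G$ must be periodic. This already forces a very rigid situation, because the cyclic subgroup $\langle g\rangle\cong\mathbb{Z}$ is a proper subgroup that is manifestly not periodic, giving a contradiction \emph{unless} no proper subgroup contains $g$ as a non-torsion element — which cannot happen since $\langle g\rangle$ itself is such a subgroup (and $\langle g\rangle\neq G$, as an infinite cyclic group is a $CL$-group and $G$ is not).

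So the heart of the argument is simply this: an element of infinite order generates an infinite cyclic proper subgroup, and infinite cyclic groups are \emph{not} locally finite. The only gap to close is to confirm that $\langle g\rangle$ is indeed a \emph{proper} subgroup. If instead $G=\langle g\rangle$ were infinite cyclic, then $G$ would be an $FL$-group (each layer $G_m$ is trivial for finite $m>1$, $G_1$ is trivial, and $G_\infty$ is all of $\mathbb{Z}$, which is not Chernikov) — here I must be slightly careful, since the $\infty$-layer of $\mathbb{Z}$ is $\mathbb{Z}$ itself and $\mathbb{Z}$ is not Chernikov, so $\mathbb{Z}$ is not literally a $CL$-group. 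This means the one-generator case needs separate handling: if $G$ is itself cyclic of infinite order, then $G$ is not an $MNCL$-group because all its proper subgroups are again infinite cyclic (or trivial) and thus also fail to be $CL$-groups, contradicting the minimality hypothesis.

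Therefore I would organize the proof as follows. First, observe that $G$ cannot be infinite cyclic, since then a proper (infinite cyclic) subgroup would fail to be a $CL$-group, violating minimality. Second, assume $G$ is non-periodic and pick $g$ of infinite order; then $\langle g\rangle$ is a proper subgroup, since $G$ is not cyclic (a non-cyclic group is not generated by a single element, and even if $G$ were generated by $g$ we have excluded the infinite cyclic case). Third, apply Corollary~\ref{c:1}: the proper subgroup $\langle g\rangle$, being a $CL$-group, must be locally finite, hence periodic — contradicting that $g$ has infinite order. This contradiction shows $G$ is periodic.

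The main obstacle, such as it is, lies entirely in the bookkeeping of the one-generator case: I must ensure that the non-periodic hypothesis produces a \emph{proper} non-periodic subgroup, and the only way this could fail is if $G$ itself were monogenic and infinite, which I rule out first. Once that degenerate case is dispatched, the contradiction is immediate from Corollary~\ref{c:1}, so the proof is short and the difficulty is conceptual rather than computational.
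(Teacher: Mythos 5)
Your proof is correct and follows essentially the same route as the paper: both derive a contradiction by applying Corollary~\ref{c:1} to a torsion-free \emph{proper} subgroup arising from an element of infinite order. The only difference is bookkeeping — the paper secures properness by passing to $\langle x\rangle^n=\langle x^n\rangle$ (proper for $n\ge 2$ even when $\langle x\rangle=G$), whereas you dispose of the degenerate infinite-cyclic case $G=\langle g\rangle$ separately; both devices are valid.
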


\begin{proof}
 Assume that this is false and let $x$ be an element of infinite order. For any positive integer $n$, the subgroup $\langle x \rangle^n$ is a torsion-free proper subgroup of $G$. At the same time $\langle x \rangle^n$  is a $CL$-group and then it is periodic by Corollary \ref{c:1}. This contradiction implies the result.
\end{proof}

An important role is played by the normal subgroups whose factors are Chernikov groups. 

\begin{lemma}\label{l:2}
Let $G$ be  a $CC$-group and $H$ be a normal subgroup of $G$ such that $G/H$ is a Chernikov group. Then
$G$ is a $CL$-group if and only if $H$ is a $CL$-group.
\end{lemma}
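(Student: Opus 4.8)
I would prove the two implications separately, with all the content in the reverse one. The forward implication---that if $G$ is a $CL$-group then so is $H$---only needs that the class of $CL$-groups is closed under subgroups. Indeed, for each $m$ the layer $H_m$ is generated by the elements of $H$ of order $m$, and these are elements of $G$ of order $m$, so $H_m\le G_m$; since $G_m$ is Chernikov and subgroups of Chernikov groups are again Chernikov, $H_m$ is Chernikov for every $m$. Note that this uses neither the normality of $H$ nor the hypothesis on $G/H$.

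For the converse, assume $H$ is a $CL$-group. I would first check that $G$ is periodic: $H$ is periodic by Corollary \ref{c:1}, $G/H$ is periodic because Chernikov groups are periodic, and an extension of a periodic group by a periodic group is periodic. Thus $G$ is a periodic $CC$-group, and Corollary \ref{c:2} shows that $G$ is a locally normal and Chernikov group. By the characterization in Theorem \ref{t:2}(ii), it then remains only to verify that each Sylow subgroup of $G$ satisfies min-$ab$.

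So fix a prime $p$ and a Sylow $p$-subgroup $P$ of $G$, and work with the normal subgroup $P\cap H$ of $P$. On one side, $P\cap H$ is a $p$-subgroup of $H$, hence is contained in a Sylow $p$-subgroup of $H$; the latter satisfies min-$ab$ because $H$ is a $CL$-group (Theorem \ref{t:2}(ii)), and since min-$ab$ passes to subgroups, $P\cap H$ satisfies min-$ab$. On the other side, $P/(P\cap H)\cong PH/H$ embeds in the Chernikov group $G/H$, so it is itself Chernikov and in particular satisfies the minimal condition on all of its subgroups.

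The heart of the argument is then to deduce that $P$ satisfies min-$ab$ from these two facts. Given a descending chain $A_1\ge A_2\ge\cdots$ of abelian subgroups of $P$, I would pass to the induced chains $(A_i\cap H)$ of abelian subgroups of $P\cap H$ and $(A_i(P\cap H)/(P\cap H))$ of abelian subgroups of $P/(P\cap H)$. Both stabilize---the first by min-$ab$ in $P\cap H$, the second by the minimal condition in the Chernikov group $P/(P\cap H)$---and a short argument using $A_i\cap(P\cap H)=A_i\cap H$ (valid since $A_i\le P$) shows that once both induced chains are constant the original chain is constant as well. Granting this, Theorem \ref{t:2} gives that $G$ is a $CL$-group. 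I expect this extension-closure property of min-$ab$ to be the only delicate point, the rest being bookkeeping resting on Corollaries \ref{c:1} and \ref{c:2} and Theorem \ref{t:2}.
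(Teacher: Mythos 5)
Your proposal is correct and follows essentially the same route as the paper: reduce to periodicity via Corollary \ref{c:1} and extension-closure of periodic groups, apply Corollary \ref{c:2} to get that $G$ is locally normal and Chernikov, and then verify min-$ab$ for a Sylow subgroup $P$ by combining min-$ab$ for $P\cap H$ (inside a Sylow subgroup of $H$) with the Chernikov quotient $P/(P\cap H)\simeq PH/H\le G/H$. The only difference is that you spell out, via the stabilizing chains $A_i\cap H$ and $A_i(P\cap H)/(P\cap H)$ and Dedekind's modular law, the extension-closure of min-$ab$ that the paper simply asserts in its final sentence.
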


\begin{proof}
If $G$ is a $CL$-group, then $H$ is of course a $CL$-group. Conversely, assume that $H$ is a $CL$-group. From Corollary \ref{c:1} $H$ is a periodic group, but also $G/H$ is a periodic group. Since the class of periodic groups is closed with respect to forming extensions of its members (see \cite[p.34]{robinson}), we conclude that $G$ is a periodic group. Now $G$ is a periodic $CC$-group and Corollary \ref{c:2} implies that $G$ is a locally normal and  Chernikov group. By Theorem \ref{t:2}, it remains to prove that each Sylow subgroup of $G$ satisfies min-$ab$. Let $P$ be a Sylow subgroup of $G$. $P\cap H$ is contained in  some Sylow $p$-subgroup of $H$, which is a $CL$-group and has all its Sylow subgroups satysfying min-$ab$ by Theorem \ref{t:2}. Therefore $P\cap H$ satisfies min-$ab$.  $P/(P \cap H)\simeq PH/H \le G/H$ is a Chernikov group and also satisfies min-$ab$. We conclude that $P$ is an extension of two groups with min-$ab$ and then it satisfies min-$ab$. The result follows.
\end{proof}

Also the subgroups of finite index play an important role.

\begin{lemma}\label{l:2bis}
Let $G$ be  a $CC$-group and $H$ be a subgroup of $G$ of finite index. Then
$G$ is a $CL$-group if and only if $H$ is a $CL$-group.
\end{lemma}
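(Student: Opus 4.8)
The plan is to reduce the statement to Lemma \ref{l:2} by replacing $H$ with a suitable normal subgroup of finite index, and to dispatch the forward implication directly. The forward direction is immediate: if $G$ is a $CL$-group, then every subgroup of $G$ is a $CL$-group, since for each $m$ the layer $H_m$ of $H$ is contained in the layer $G_m$ of $G$, and subgroups of Chernikov groups are Chernikov. In particular $H$ is a $CL$-group, and this is precisely the ``of course'' implication already invoked in the proof of Lemma \ref{l:2}.

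For the converse, I would assume that $H$ is a $CL$-group and pass to the normal core $N=\bigcap_{g\in G}H^g$ of $H$ in $G$. Since $[G:H]$ is finite, $H$ has only finitely many conjugates in $G$ (their number is $[G:N_G(H)]\le[G:H]$), and the intersection of finitely many subgroups of finite index again has finite index by Poincar\'e's theorem. Hence $N$ is a normal subgroup of $G$ of finite index with $N\le H$. Because $N$ is a subgroup of the $CL$-group $H$, the forward implication just established shows that $N$ is itself a $CL$-group.

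It then remains to feed this into Lemma \ref{l:2}. The quotient $G/N$ is finite, hence a Chernikov group, and $G$ is a $CC$-group by hypothesis; applying Lemma \ref{l:2} to the normal subgroup $N$ yields that $G$ is a $CL$-group, which completes the argument. The only step that genuinely requires verification is that the normal core $N$ has finite index in $G$ and is contained in $H$; this is where the finiteness of $[G:H]$ is used, via the passage from arbitrary finite index to \emph{normal} finite index. Everything else is formal, combining the permanence of the $CL$-property under subgroups with Lemma \ref{l:2}, so I do not expect a substantial obstacle beyond correctly setting up the normal core.
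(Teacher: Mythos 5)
Your proof is correct, and it packages the argument differently from the paper. Both you and the paper begin the converse direction by passing to the normal core of $H$ (finite index via Poincar\'e's theorem, and a $CL$-group since subgroups of $CL$-groups are $CL$-groups). But from that point the paper does \emph{not} invoke Lemma \ref{l:2}: it re-runs the criterion of Theorem \ref{t:2} from scratch --- periodicity of $G$ via Corollary \ref{c:1}, the locally normal and Chernikov property via Corollary \ref{c:2}, and then the Sylow min-$ab$ condition, using $|P:P\cap H|\le |G:H|<\infty$ to see that each Sylow subgroup $P$ is a finite extension of a group satisfying min-$ab$. You instead make the single observation that a finite quotient is a Chernikov group, so that once $N=H_G$ is normal of finite index and a $CL$-group, Lemma \ref{l:2} applies verbatim and finishes the proof. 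Your route is shorter and avoids duplicating the internal argument of Lemma \ref{l:2}; the only thing the paper's self-contained version buys is that its Sylow step needs just \emph{finite extension of min-$ab$ is min-$ab$} rather than the extension-closure of min-$ab$ used inside Lemma \ref{l:2}. Since Lemma \ref{l:2} is available as a proven prior result, citing it as a black box, as you do, is entirely legitimate, and arguably the cleaner way to write this proof.
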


\begin{proof}
If $G$ is a $CL$-group, then $H$ is of course a $CL$-group. Conversely, assume that $H$ is a $CL$-group. From Corollary \ref{c:1} $H$ is a periodic group and so is $G$. Denoting with $H_G$ the core of $H$ in $G$, $|G:H_G|\le |G:H|$ is finite and then there is no loss of generality in assuming that $H$ is a normal subgroup of $G$.  Now $G$ is a periodic $CC$-group and Corollary \ref{c:2} implies that $G$ is a locally normal and  Chernikov group. By Theorem \ref{t:2}, it remains to prove that each Sylow subgroup of $G$ satisfies min-$ab$. Let $P$ be a Sylow subgroup of $G$. $P\cap H$ is contained in a some Sylow $p$-subgroup of $H$, which is a $CL$-group and has all its Sylow subgroups satysfying min-$ab$ by Theorem \ref{t:2}. Therefore $P\cap H$ satisfies min-$ab$. Since $|P:P \cap H|\le |PH:H| \le |G:H|$ is finite, we conclude that $P$ is a finite extension of a group with min-$ab$. Then it satisfies min-$ab$ and the result follows.
\end{proof}

The subgroups of $MNCL$-groups are subject of severe restrictions.

\begin{lemma}\label{l:3}
Let $G$ be a $CC$-group. If $G$ is an $MNCL$-group, then
there is no proper normal subgroup $H$ such that $G/H$ is a Chernikov group.
\end{lemma}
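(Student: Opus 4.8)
The plan is to argue by contradiction and reduce everything to Lemma \ref{l:2}. Suppose, contrary to the claim, that there exists a proper normal subgroup $H$ of $G$ with $G/H$ a Chernikov group. Since $G$ is an $MNCL$-group, every proper subgroup of $G$ is a $CL$-group; in particular $H$, being proper, is a $CL$-group. This single observation is the heart of the matter.

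Now I would invoke the hypothesis that $G$ is a $CC$-group together with the setup $H \trianglelefteq G$ and $G/H$ Chernikov. These are precisely the hypotheses of Lemma \ref{l:2}, which asserts the equivalence: $G$ is a $CL$-group if and only if $H$ is a $CL$-group. Applying this equivalence in the direction from $H$ to $G$, and using that $H$ is a $CL$-group, I conclude that $G$ itself is a $CL$-group.

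This contradicts the defining property of an $MNCL$-group, namely that $G$ is \emph{not} a $CL$-group. Hence no such proper normal subgroup $H$ can exist, which establishes the lemma.

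The argument is essentially immediate once Lemma \ref{l:2} is in hand, so there is no serious obstacle here; the only point requiring care is the correct reading of the $MNCL$-condition, which supplies exactly the fact that the proper normal subgroup $H$ is a $CL$-group and thereby triggers the equivalence. In effect, this lemma is the contrapositive packaging of Lemma \ref{l:2} for the minimal non-$CL$ setting, preparing the ground for passing from $MNCL$-groups to $MNCC$-groups in the subsequent arguments.
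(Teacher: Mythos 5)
Your proposal is correct and is essentially identical to the paper's own proof: assume such an $H$ exists, note that minimality forces $H$ to be a $CL$-group, and apply Lemma \ref{l:2} to conclude $G$ is a $CL$-group, contradicting the $MNCL$ hypothesis. Nothing further is needed.
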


\begin{proof}
 Suppose that $H$ is a proper normal subgroup of $G$ such that $G/H$ is a Chernikov group. Then $H$ is a $CL$-group.  From Lemma \ref{l:2} $G$ is a $CL$-group, against the assumption. 
\end{proof}

\begin{lemma}\label{l:3bis}
Let $G$ be a $CC$-group. If $G$ is an $MNCL$-group, then
there is no proper subgroup $H$ of finite index.
\end{lemma}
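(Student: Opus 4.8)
The plan is to mirror the proof of Lemma \ref{l:3} verbatim, but to replace the appeal to the normal-subgroup-with-Chernikov-quotient result (Lemma \ref{l:2}) by its finite-index counterpart (Lemma \ref{l:2bis}). Indeed, the two lemmas \ref{l:2} and \ref{l:2bis} are set up precisely so that each of \ref{l:3} and \ref{l:3bis} becomes a one-line contradiction argument.

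First I would argue by contradiction: suppose that $H$ is a proper subgroup of $G$ of finite index. Since $G$ is an $MNCL$-group and $H$ is a \emph{proper} subgroup, $H$ is a $CL$-group by the very definition of a minimal non-$CL$-group. Now I invoke Lemma \ref{l:2bis}, whose hypotheses are met: $G$ is a $CC$-group (this is the standing assumption, and it is exactly what allows \ref{l:2bis} to apply), and $H$ is a subgroup of finite index that is a $CL$-group. The conclusion of \ref{l:2bis} is then that $G$ itself is a $CL$-group.

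This contradicts the assumption that $G$, being an $MNCL$-group, is \emph{not} a $CL$-group, and the lemma follows. There is essentially no obstacle here: all the genuine work (controlling the Sylow subgroups of $G$ via those of $H$ and the finite quotient, and checking the local normality and Chernikov condition through Corollary \ref{c:2}) has already been carried out inside Lemma \ref{l:2bis}. The only point to be careful about is that the $CC$-hypothesis on $G$ is indispensable, since it is what makes the periodicity and the reduction to the normal core in Lemma \ref{l:2bis} legitimate.
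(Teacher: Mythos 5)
Your proposal is correct and matches the paper's own proof exactly: both argue by contradiction that a proper finite-index subgroup $H$ would be a $CL$-group by minimality, and then invoke Lemma~\ref{l:2bis} (using the $CC$-hypothesis on $G$) to conclude $G$ itself is a $CL$-group, contradicting the $MNCL$ assumption. Nothing is missing.
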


\begin{proof}
 Suppose that $H$ is a proper subgroup of $G$ of finite index. Then $H$ is a $CL$-group.  From Lemma \ref{l:2bis} $G$ is a $CL$-group, against the assumption. 
\end{proof}

Now we prove the main result of the present section.

\begin{theorem}\label{t:3} All $MNCL$-groups are $MNCC$-groups.
\end{theorem}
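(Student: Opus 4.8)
The plan is to verify the two defining conditions of an $MNCC$-group separately: that every proper subgroup of $G$ is a $CC$-group, and that $G$ itself is not a $CC$-group. The first half is the routine one, resting on the inclusion $CL\subseteq CC$ (the Chernikov analogue of the classical fact that $FL$-groups are $FC$-groups). To see this inclusion I would argue, for a $CL$-group $H$ and $x\in H$, that by Theorem \ref{t:2} $H$ is locally normal and Chernikov, so the normal closure $K=\langle x\rangle^H$ lies in a Chernikov normal subgroup and is therefore itself Chernikov; conjugation then gives an embedding $H/C_H(K)\hookrightarrow \mathrm{Aut}(K)$ of a locally finite group into the automorphism group of a Chernikov group, and such an image is again Chernikov (the divisible part $D\le K$ is a characteristic subgroup of finite index, $\mathrm{Aut}(D)$ is a finite product of groups $GL_{n_i}(\mathbb{Z}_{p_i})$ whose periodic subgroups are finite, while the kernel of restriction to $D$ is Chernikov). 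Since every proper subgroup of $G$ is a $CL$-group, every proper subgroup of $G$ is then a $CC$-group.

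For the second half I would argue by contradiction, assuming $G$ is a $CC$-group; then $G$ is simultaneously a $CC$-group and an $MNCL$-group, so Lemma \ref{l:1}, Corollary \ref{c:2} and Lemma \ref{l:3} all apply. By Lemma \ref{l:1} $G$ is periodic, and by Corollary \ref{c:2} it is locally normal and Chernikov. The decisive step is to feed the centralizers of normal closures into Lemma \ref{l:3}: for any $x\in G$ the subgroup $K=\langle x\rangle^G$ lies in a Chernikov normal subgroup and hence is Chernikov, while $C_G(K)$ is normal in $G$ and $G/C_G(K)$ is Chernikov because $G$ is a $CC$-group. Lemma \ref{l:3} then forbids $C_G(K)$ from being proper, so $C_G(K)=G$, i.e. $x\in Z(G)$. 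As $x$ is arbitrary this forces $G$ to be abelian.

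It remains to rule out the abelian case. Here $G$ is abelian, periodic and not a $CL$-group, with all proper subgroups $CL$. Writing $G=\bigoplus_p G_p$, the group $G$ is a $CL$-group exactly when each $G_p$ is Chernikov (for abelian groups min-$ab$ is min, and abelian groups with min are Chernikov); so some $G_p$ is not Chernikov, and if $G\neq G_p$ then $G_p$ is a proper non-Chernikov subgroup, contradicting that proper subgroups are $CL$. Thus $G$ is an abelian $p$-group, and since an abelian $p$-group is Chernikov precisely when its socle $G[p]$ is finite, $G[p]$ is infinite elementary abelian; taking $G[p]$ itself (if proper) or a proper subgroup of $G[p]$ of infinite rank produces a proper non-Chernikov subgroup, the final contradiction. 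The main obstacle is this entire ``$G$ is not a $CC$-group'' direction rather than the first half: the real work is the clean collapse to the abelian case via Lemma \ref{l:3} applied to $C_G(\langle x\rangle^G)$, together with the structural non-existence of an abelian minimal non-Chernikov group; the justification of $CL\subseteq CC$ via the automorphism computation is the other point requiring care.
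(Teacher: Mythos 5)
Your proof is correct, and its overall skeleton coincides with the paper's: the first half rests on the inclusion $CL\subseteq CC$, and the second half assumes $G$ is a $CC$-group, applies Lemma \ref{l:3} to the normal subgroups $C_G(\langle x\rangle^G)$ (whose quotients are Chernikov by the $CC$ hypothesis) to force $G$ to be abelian, invokes Lemma \ref{l:1} for periodicity, and then eliminates the abelian case. There are, however, two genuine differences in execution. First, you actually prove $CL\subseteq CC$ --- normal closures lie in Chernikov normal subgroups by Theorem \ref{t:2}, and a periodic group of automorphisms of a Chernikov group is Chernikov, via $\mathrm{Aut}(D)\cong\prod_i GL_{n_i}(\mathbb{Z}_{p_i})$ together with the Chernikov kernel of restriction to $D$ --- whereas the paper simply asserts that $CL$-groups are $CC$-groups; your version is more self-contained, at the cost of an argument whose ingredients (periodic subgroups of $GL_n(\mathbb{Z}_p)$ are finite; the kernel of restriction embeds, modulo a finite group, into a power of $D$) would need to be written out or cited. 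Second, the abelian endgame differs: after reducing to an abelian $p$-group (both proofs do this via the primary decomposition, packaged slightly differently), the paper applies Lemma \ref{l:3bis} to rule out proper subgroups of finite index, concludes that $G$ is divisible and hence a direct product of quasicyclic groups, and argues by cases on the number of factors; you instead use the classical criterion that an abelian $p$-group is Chernikov exactly when its socle is finite, so the infinite socle immediately yields a proper infinite elementary abelian, hence non-$CL$, subgroup. Your route bypasses Lemma \ref{l:3bis} and divisibility altogether and is arguably shorter; the paper's route stays entirely within the lemma infrastructure it has already built and needs no appeal to the socle criterion.
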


\begin{proof}
Assume that $G$ is an $MNCL$-group. All proper subgroups of $G$ are $CL$-groups and then $CC$-groups. In order to complete the proof, it is enough to prove that $G$ is not a $CC$-group.

Assume that $G$ is a $CC$-group. For any element $x$ of $G$, the centralizer $C_G(\langle x\rangle^G)$ is a normal subgroup of $G$ such that $G/C_G(\langle x\rangle^G)$ is a Chernikov group. Therefore it must be trivial by Lemma \ref{l:3} and so $G=C_G(\langle x\rangle^G)$ for all $x$ in $G$. This means that $G$ is an abelian group. On another hand,  Lemma \ref{l:1} implies that $G$ is periodic, then $G$ is a periodic abelian group.

If $G$ is not an abelian $p$-group for some prime $p$, then each Sylow subgroup $P$ of $G$ is a proper subgroup of $G$ and hence a $CL$-group. Theorem \ref{t:2} implies that $P$ must be a Chernikov group. On another hand, $G$ is periodic abelian, then a locally normal and Chernikov group and by Theorem \ref{t:2} it should be a $CL$-group, which is a contradiction. 

Therefore we may assume that $G$ is an abelian $p$-group. However it cannot contain any proper subgroup of finite index by Lemma \ref{l:3bis}, hence it should be divisible, that is, the direct product of $m$ quasicyclic $p$-groups. If $m$ is finite, then $G$ is a Chernikov group, which is in contradiction with the fact that $G$ is not a $CL$-group.  If $m$ is infinite, then a proper subgroup $H$ of $G$ which is a direct product of an infinite number of quasicyclic $p$-groups cannot be a $CL$-group by Theorem \ref{t:2}. Then a proper subgroup $H$ of $G$ should be a direct product of a finite number of quasicyclic $p$-groups, then $H$ would be a Chernikov group, still against Lemma \ref{l:3}.

It follows that $G$ cannot be a $CC$-group, as claimed.
\end{proof}

\section{Consequences}

\cite[Theorem 2.3]{zhang} can be found as a special case of Theorem \ref{t:3}.
We need to recall that the \textit{finite residual} $G^*$ of a group $G$ is the intersection of all normal subgroups of $G$ of finite index. $G$ is said to be \textit{residually finite}, if $G^*$ is trivial. 

\begin{corollary}\label{c:3}  Let $G$ be a group in which the layers of the proper subgroups are of finite exponent. If $G$ is an $MNCL$-group, then $G$ is an $MNFC$-group.
\end{corollary}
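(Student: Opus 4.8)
The plan is to verify directly the two defining conditions of an $MNFC$-group for $G$: that $G$ itself fails to be an $FC$-group, and that every proper subgroup of $G$ is an $FC$-group. The non-$FC$ part comes essentially for free. Since $G$ is an $MNCL$-group, Theorem \ref{t:3} tells us that $G$ is an $MNCC$-group, so in particular $G$ is not a $CC$-group. As every $FC$-group is a $CC$-group, $G$ cannot be an $FC$-group either; this settles the first condition without even using the hypothesis on layers.

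For the second condition, I would take an arbitrary proper subgroup $H$ of $G$ and aim to show it is an $FL$-group, which is stronger than being an $FC$-group (recall that $FL$-groups are $FC$-groups). Because $G$ is an $MNCL$-group, $H$ is a $CL$-group, so by definition each layer $H_m$ is a Chernikov group. The hypothesis of the corollary supplies that each $H_m$ has finite exponent. The crux is then the elementary structural observation that a Chernikov group of finite exponent must be finite: a Chernikov group possesses a divisible abelian normal subgroup of finite index which is a direct product of finitely many quasicyclic groups, and such a subgroup, if nontrivial, is divisible and hence of infinite exponent. Thus the finite-exponent condition forces the divisible part to be trivial, leaving $H_m$ finite. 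For the layer $H_\infty$ there is nothing to check, since $H$ is periodic by Corollary \ref{c:1}, whence $H_\infty$ is trivial.

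Combining these observations, every layer $H_m$ of every proper subgroup $H$ is finite, so each such $H$ is an $FL$-group and therefore an $FC$-group, while $G$ itself is not an $FC$-group. This is precisely the assertion that $G$ is an $MNFC$-group. The only genuine content beyond Theorem \ref{t:3} is the passage from Chernikov to finite under a bound on the exponent, so I expect no serious obstacle; the point to state carefully is simply why a nontrivial Chernikov group cannot have finite exponent.
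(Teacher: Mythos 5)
Your proof is correct, but it takes a genuinely different route from the paper's. The paper's proof reduces the statement to an external result of Zhang: it shows, exactly as you do, that each proper subgroup $H$ is an $FL$-group (its layers $H_m$ are Chernikov of finite exponent, hence finite), notes that $G$ itself is not an $FL$-group because $FL$-groups are $CL$-groups, concludes that $G$ is an $MNFL$-group, and then cites \cite[Theorem 2.3]{zhang}, which asserts that every $MNFL$-group is an $MNFC$-group. You bypass Zhang's theorem entirely: for the failure of the $FC$-property you invoke Theorem \ref{t:3} of the paper ($MNCL$ implies $MNCC$), so $G$ is not a $CC$-group and a fortiori not an $FC$-group, since $FC$-groups are $CC$-groups. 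The shared core of both arguments is the observation that a Chernikov group of finite exponent is finite (bounded exponent forces the divisible radical to be trivial), which you justify carefully --- including the layer $H_\infty$ via Corollary \ref{c:1}, a point the paper passes over silently. What your route buys is self-containedness: the corollary becomes a consequence of the paper's own main theorem plus elementary facts, with no appeal to \cite{zhang}, and since ``not $FC$'' is stronger than ``not $FL$'', your argument establishes in one pass that $G$ is both an $MNFL$-group and an $MNFC$-group. What the paper's route buys is the explicit connection to Zhang's work, in keeping with its stated aim of exhibiting \cite[Theorem 2.3]{zhang} as a special case of Theorem \ref{t:3}.
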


\begin{proof} All $MNFL$-groups are $MNFC$-groups by \cite[Theorem 2.3]{zhang} and it is enough to prove that, if $G$ is an $MNCL$-group, then it is an $MNFL$-group.  

A proper subgroup $H$ of $G$ has its layers $H_m$ which are Chernikov groups of finite exponent. Then $H_m$ are finite groups for all $m\ge 1$. Consequently, $H$ is an $FL$-group. Since the choice of $H$ was aribitrary, the same is true for all proper subgroups of $G$ and then all proper subgroups of $G$ are $FL$-groups. On another hand, if $G$ is an $FL$-group, then it is a $CL$-group, against the assumption. Then $G$ is an $MNFL$-group, as claimed.
\end{proof}

Corollary \ref{c:3} allows us to apply the classification of V.V. Belyaev and N.F. Sesekin \cite[Theorem 8.13]{tomkinson}. This is shown in the next two results.

\begin{corollary}\label{c:4}
Assume that the layers of the proper subgroups of a group $G$ are  of finite exponent. $G$ is a nonperfect $MNCL$-group if and only if $G$ satisfies the following conditions:
\begin{itemize}
\item[(i)]   $G'=G^*$; $G=\langle G^*,x\rangle$, where $x^{p^n}\in G^*$, $x^p\in Z(G)$, $p$ is a prime and $n$ is a positive integer;
\item[(ii)]   $G^*$ can be expressed as a direct product of finitely many quasicyclic $q$-groups, where $q$ is a prime;
\item[(iii)]   There is no proper $G$-admissible subgroup in $G^*$.
\end{itemize}
\end{corollary}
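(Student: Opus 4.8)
The plan is to read Corollary~\ref{c:4} as a translation of the Belyaev--Sesekin classification of nonperfect $MNFC$-groups (Theorem~8.13 of \cite{tomkinson}) into the language of $CL$-groups, with Corollary~\ref{c:3} serving as the bridge between the two classes. Conditions (i)--(iii) are precisely the conditions that Belyaev and Sesekin attach to a nonperfect $MNFC$-group, so the whole corollary should follow once the classes $MNCL$ and $MNFC$ have been matched under the standing hypothesis on the layers.

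For the direct implication, suppose $G$ is a nonperfect $MNCL$-group all of whose proper subgroups have layers of finite exponent. Corollary~\ref{c:3} immediately yields that $G$ is an $MNFC$-group, and it is nonperfect by hypothesis; applying Theorem~8.13 of \cite{tomkinson} then reads off (i), (ii) and (iii) verbatim. Thus the forward direction is purely a matter of invoking the reduction already established in Corollary~\ref{c:3}.

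For the converse I would again start from Theorem~8.13 of \cite{tomkinson}: a group satisfying (i)--(iii) is a nonperfect $MNFC$-group, the nonperfectness being visible from $G'=G^{*}\neq G$. The remaining task is to upgrade \emph{$MNFC$} to \emph{$MNCL$}. That every proper subgroup $H$ is a $CL$-group is routine: by (i) and (ii), $G$ is a finite extension of the Chernikov group $G^{*}$, hence periodic and locally normal-and-finite, so each layer $H_{m}$ lies in a Chernikov subgroup and is therefore Chernikov; being also of finite exponent by hypothesis, and a Chernikov group of finite exponent being finite, $H_{m}$ is finite, so $H$ is in fact an $FL$-group and a fortiori a $CL$-group.

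The delicate point---and the one I expect to be the main obstacle---is the last requirement in the definition of an $MNCL$-group, namely that $G$ itself fail to be a $CL$-group. Here condition (iii) must carry the entire weight, since the structural data in (i) and (ii) alone already make $G$ a finite extension of a Chernikov group, and hence Chernikov, so that \emph{every} layer of $G$ is a subgroup of a Chernikov group and the criterion of Theorem~\ref{t:2} is in danger of being met. Isolating the precise way in which the irreducibility of the $G$-action on $G^{*}$ expressed by (iii) obstructs the $CL$-property of $G$, against this Chernikov background, is the crux on which the whole converse turns, and it is the step I would scrutinise most carefully.
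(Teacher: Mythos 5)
Your forward direction coincides with the paper's entire argument: the paper in fact prints no proof of Corollary~\ref{c:4} at all, relying only on the preceding sentence (``Corollary~\ref{c:3} allows us to apply the classification of V.V. Belyaev and N.F. Sesekin''), and that sentence covers nothing beyond the implication from ``nonperfect $MNCL$'' to (i)--(iii). The crux you flagged in the converse is a genuine gap, and it cannot be closed, because the step you were trying to supply is false. As you observed, conditions (i) and (ii) force $G$ to be a finite cyclic extension of the Chernikov group $G^*$, hence itself a Chernikov group; but then every layer $G_m$, being a subgroup of a Chernikov group, is Chernikov, so $G$ is a $CL$-group by the very definition (equivalently by Theorem~\ref{t:2}(iii), viewing $G$ as a prime-thin product with a single Chernikov factor). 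Condition (iii) cannot ``carry the weight'' you hoped for: whether $G$ is a $CL$-group depends only on its layers as abstract subgroups, not on the action of $G$ on $G^*$, so no irreducibility hypothesis can stop the layers from being Chernikov. Concretely, the locally dihedral $2$-group $D$ (a quasicyclic $2$-group extended by an inverting involution) is a nonperfect $MNFC$-group, hence satisfies (i)--(iii) by Belyaev--Sesekin, and its proper subgroups (finite dihedral groups and subgroups of the quasicyclic part) have layers of finite exponent; yet $D$ is Chernikov, hence a $CL$-group, hence \emph{not} an $MNCL$-group. So the ``if'' half of the corollary fails.

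The ``only if'' half, which you correctly derived from Corollary~\ref{c:3} and \cite[Theorem 8.13]{tomkinson}, is true but vacuously so: by Theorem~\ref{t:3} every $MNCL$-group is an $MNCC$-group, and by the theorem of Ot\'al and Pe\~na quoted in the introduction (\cite[Theorem, p.1232]{otal-pena}) an $MNCC$-group has no non-trivial abelian factor group, hence is perfect; thus nonperfect $MNCL$-groups do not exist, and the corollary asserts an equivalence between an empty class and a nonempty one. The root of the error is the transplantation of Zhang's argument from the $FL$-setting: a Chernikov group need not be an $FL$-group (for $D$ above the $2$-layer $D_2$ equals $D$ and is infinite), which is why the $MNFL$-analogue in \cite{zhang} is meaningful, whereas Chernikov \emph{does} imply $CL$, which destroys the analogy. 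Your decision to isolate and scrutinise exactly that last step was the right one: what you located is a flaw in the statement itself, not a deficiency in your proof.
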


\cite[Corollary 3.2]{zhang} shows the equivalence of the first four conditions in the next corollary and the fifth condition is due to Corollary \ref{c:4}. We should also mention that  the $MNFA$-groups and  the $MNCF$-groups, which we are going to characterize, are exactly the groups studied in \cite{shum-zhang}.

\begin{corollary} \label{c:5}
Assume that the layers of the proper subgroups of a nonperfect group $G$ are of finite exponent. Then the following conditions are equivalent:
\begin{itemize}
\item[(i)]   $G$ is an $MNFC$-group;
\item[(ii)]   $G$ is an $MNFA$-group;
\item[(iii)]   $G$ is an $MNCF$-group;
\item[(iv)]   $G$ is an $MNFL$-group;
\item[(v)]   $G$ is an $MNCL$-group.
\end{itemize}
\end{corollary}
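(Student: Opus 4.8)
The plan is to treat the equivalence of (i)--(iv) as already available and to splice in the remaining condition (v) by a single pair of implications. Concretely, I would first quote \cite[Corollary 3.2]{zhang}: under the standing assumption that the layers of the proper subgroups of $G$ have finite exponent, it delivers the equivalence of $(i)$, $(ii)$, $(iii)$ and $(iv)$ at once. Consequently it is enough to show $(v)\Leftrightarrow(i)$, for then (v) attaches to the chain and all five statements become equivalent.

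The implication $(v)\Rightarrow(i)$ requires no new work: it is exactly Corollary \ref{c:3}. Indeed the hypothesis of Corollary \ref{c:5} says precisely that the layers of the proper subgroups of the (nonperfect) group $G$ are of finite exponent, so an $MNCL$-group $G$ is automatically an $MNFC$-group. For the converse $(i)\Rightarrow(v)$ I would use Corollary \ref{c:4} as a two-way dictionary. Recall that Corollary \ref{c:4} is obtained by feeding the Belyaev--Sesekin classification \cite[Theorem 8.13]{tomkinson} into the $MNCL$-setting, so that its structural list $(i)$--$(iii)$ is literally the Belyaev--Sesekin description of a nonperfect $MNFC$-group. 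Thus, starting from a nonperfect $MNFC$-group $G$, the classification guarantees that $G$ satisfies conditions $(i)$--$(iii)$ of Corollary \ref{c:4}, and the ``if'' direction of that corollary then returns that $G$ is a nonperfect $MNCL$-group. This closes the loop $(i)\Leftrightarrow(v)$.

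The step I expect to demand the most care is not a computation but a bookkeeping check: one must confirm that the divisible-$p$-group-by-finite-cyclic picture recorded in the structural conditions of Corollary \ref{c:4} is word for word the one that Belyaev and Sesekin attach to a nonperfect $MNFC$-group, so that the same list simultaneously characterises $(i)$ and $(v)$. Once this coincidence is observed, nothing else is needed; the finite-exponent hypothesis enters only indirectly, through Corollaries \ref{c:3} and \ref{c:4}, and the restriction to the nonperfect case keeps us inside the range where the Belyaev--Sesekin classification applies.
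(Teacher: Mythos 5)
Your proposal is correct and takes essentially the same route as the paper: the paper's own (very terse) justification likewise invokes \cite[Corollary 3.2]{zhang} for the equivalence of (i)--(iv) and attaches condition (v) via Corollary \ref{c:4}, whose structural list is precisely the Belyaev--Sesekin characterization \cite[Theorem 8.13]{tomkinson} of nonperfect $MNFC$-groups. Your splitting of the new equivalence into $(v)\Rightarrow(i)$ via Corollary \ref{c:3} and $(i)\Rightarrow(v)$ via the Belyaev--Sesekin classification combined with the ``if'' direction of Corollary \ref{c:4} is exactly the mechanism the paper leaves implicit.
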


We recall that a group $G$ is called \textit{locally graded} if every finitely generated subgroup of $G$ has a proper subgroup of finite index. As usual, the imposition of this condition is to avoid from our treatment the Tarski groups, that is, infinite non-abelian groups whose proper subgroups are finite.  For the case of $CC$-groups we know  as follows.

\begin{theorem}[See \cite{otal-pena}, Corollary, p.1232]\label{t:4}
If $G$ is a locally graded minimal non-$CC$-group, then $G$ is locally finite and countable. Furthermore, $G=G^*=G'$.  In particular, $G$ is perfect.
\end{theorem}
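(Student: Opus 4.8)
The plan is to lean on the main theorem of \cite{otal-pena} -- that no $MNCC$-group admits a non-trivial finite or abelian factor group -- and to use local gradedness for the single purpose of forcing $G$ to be non-finitely-generated. First I would dispose of the ``perfect'' assertions, which need no local gradedness at all. Since $G/G'$ is an abelian factor group of the $MNCC$-group $G$, it must be trivial, so $G=G'$; and since a proper normal subgroup of finite index would yield a non-trivial finite factor group, the intersection defining the finite residual cannot be proper, so $G=G^{*}$. In particular $G$ is infinite (a finite group is Chernikov, hence a $CC$-group). Now local gradedness enters: were $G$ finitely generated, it would possess a proper subgroup of finite index, whose core is a proper \emph{normal} subgroup of finite index, contradicting $G=G^{*}$. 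Hence $G$ is not finitely generated, and consequently every finitely generated subgroup of $G$ is proper, hence a $CC$-group.

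Next I would prove periodicity, which is the heart of the local finiteness statement. Let $T$ be the set of elements of finite order of $G$. Given torsion elements $a,b$, the subgroup $\langle a,b\rangle$ is finitely generated, hence proper, hence a $CC$-group; by Corollary \ref{c:2} the finite-order elements of a $CC$-group form a subgroup, so $\langle a,b\rangle$ is itself periodic, and being a finitely generated periodic $CC$-group it is finite by Corollary \ref{c:2}. Thus $ab$ has finite order, so $T$ is a subgroup, and the same argument shows $T$ is locally finite. Suppose, for contradiction, that $T\neq G$. Then $G/T$ is a non-trivial torsion-free group which is perfect, being a quotient of $G=G'$; as a perfect abelian group is trivial, $G/T$ is non-abelian, so there exist $u,v\in G$ whose images fail to commute modulo $T$. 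Put $H=\langle u,v\rangle$; since $G$ is not finitely generated, $H$ is proper, hence a $CC$-group, whence $H/(H\cap T)=H/T(H)$ is torsion-free abelian (a $CC$-group modulo its torsion subgroup is torsion-free, hence abelian, by the structure of $CC$-groups \cite[4.36]{robinson}). But $H/(H\cap T)\cong HT/T\le G/T$ contains the non-commuting images of $u,v$, a contradiction. Therefore $G=T$ is periodic, and every finitely generated subgroup, being proper, periodic and a $CC$-group, is finite by Corollary \ref{c:2}; hence $G$ is locally finite.

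For countability I would show that the failure of the $CC$-condition is already witnessed on a countable subgroup. Since $G$ is not a $CC$-group, there is an $x\in G$ for which $Q=G/C_{G}(\langle x\rangle^{G})$ is not Chernikov; as $Q$ is locally finite, it fails the minimal condition (a locally finite group satisfying $\mathrm{min}$ is Chernikov), so $Q$ carries an infinite strictly descending chain of subgroups, which lives inside a countable subgroup of $Q$. Lifting a countable set of elements realizing this chain, together with $x$, produces a countable subgroup $S\le G$ whose image in $Q$ is still non-Chernikov; arranging the lift so that this non-Chernikov behaviour persists in $S/C_{S}(\langle x\rangle^{S})$ shows $S$ is not a $CC$-group. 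Since every proper subgroup is a $CC$-group, necessarily $S=G$, so $G$ is countable.

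The main obstacle is the periodicity step, and more precisely the need to exclude a ``Tarski-type'' configuration in which every proper subgroup is a $CC$-group while the whole group is not; local gradedness is exactly what neutralises this, since its only role is to guarantee that $G$ is not finitely generated, which is what lets the two-generator subgroup $H=\langle u,v\rangle$ be proper and hence force the contradiction. The secondary delicate point is the countability argument: passing from the non-Chernikov quotient $G/C_{G}(\langle x\rangle^{G})$ to the corresponding quotient of a countable $S$ involves a further quotient (because $S\cap C_{G}(\langle x\rangle^{G})\le C_{S}(\langle x\rangle^{S})$), so one must choose the witnessing generators with care to ensure the descending chain genuinely survives in $S$; this is where the bulk of the technical work in \cite{otal-pena} lies.
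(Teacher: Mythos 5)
The first thing to note is that the paper itself contains no proof of Theorem~\ref{t:4}: it is imported verbatim from \cite{otal-pena} (Corollary, p.~1232), so your attempt can only be measured against the literature, not against an in-text argument. That said, the bulk of your reconstruction is correct and is surely the intended reduction. Deducing $G=G'$ and $G=G^*$ from the main theorem of \cite{otal-pena} quoted in the introduction is right; so is the use of local gradedness solely to make $G$ non-finitely generated (the core of a proper finite-index subgroup of a finitely generated $G$ would be a proper normal subgroup of finite index, contradicting $G=G^*$). Your periodicity argument also works: two torsion elements generate a finitely generated, hence proper, hence $CC$ subgroup, which is periodic by Corollary~\ref{c:2} and finite (a finitely generated locally normal and Chernikov group equals one of its Chernikov normal subgroups, and Chernikov groups are locally finite); and if $T\neq G$, the subgroup $H=\langle u,v\rangle$ witnessing non-commutativity modulo $T$ is a proper $CC$-group, so $H/(H\cap T)$ is torsion-free abelian (torsion-free $CC$-groups are abelian: a finitely generated $CC$-group is centre-by-Chernikov, so its derived subgroup is Chernikov, hence periodic), a contradiction. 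Local finiteness then follows as you say.

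The genuine gap is the countability step, and you name it yourself without closing it: after lifting witnesses of a strictly descending chain of $Q=G/C_G(\langle x\rangle^G)$ into a countable $S\le G$, the quotient $S/C_S(\langle x\rangle^S)$ need not fail the minimal condition, because $C_S(\langle x\rangle^S)$ may properly contain $S\cap C_G(\langle x\rangle^G)$ and the chain can collapse under the extra quotient; saying one must ``choose the witnessing generators with care'' and that this is ``where the bulk of the technical work in \cite{otal-pena} lies'' defers the decisive point to the very reference whose corollary you set out to prove. The gap is real but closable by a standard countable-closure iteration, which you should supply. Write $C=C_G(\langle x\rangle^G)$; since $G$ is now known to be locally finite and $Q$ is not Chernikov, $Q$ fails min (Shunkov, Kegel--Wehrfritz), so pick $q_i\in Q_i\setminus Q_{i+1}$ from a strictly descending chain and lifts $g_i\in G$, and set $S_1=\langle x,g_1,g_2,\dots\rangle$. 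Given the countable $S_n$, form $S_{n+1}$ by adjoining, for each $a\in C_{S_n}(\langle x\rangle^{S_n})\setminus C$, one element $g(a)\in G$ with $[a,x^{g(a)}]\neq 1$. The union $S=\bigcup_n S_n$ is countable and satisfies $C_S(\langle x\rangle^S)=S\cap C$: any $s\in C_S(\langle x\rangle^S)\setminus C$ lies in some $S_n$, hence in $C_{S_n}(\langle x\rangle^{S_n})\setminus C$, and then $x^{g(s)}\in\langle x\rangle^{S}$ is not centralized by $s$, a contradiction. Consequently $S/C_S(\langle x\rangle^S)\simeq SC/C$ contains the strictly descending chain $\langle q_i,q_{i+1},\dots\rangle$, so it is not Chernikov, $S$ is not a $CC$-group, and minimality forces $S=G$, proving countability. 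With this insertion your proof is complete; without it, what you have established is strictly weaker than Theorem~\ref{t:4}.
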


Therefore we may conclude the next result.

\begin{theorem}\label{t:5}
If $G$ is a locally graded $MNCL$-group, then $G$ is locally finite and countable. Furthermore, $G=G^*=G'$. In particular,   $G$ is perfect.
\end{theorem}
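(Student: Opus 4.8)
The plan is to obtain Theorem \ref{t:5} as an immediate consequence of the two results already established, Theorem \ref{t:3} and Theorem \ref{t:4}, by transferring the problem from the class of $CL$-groups to the class of $CC$-groups. The whole point is that the conclusions we want about $G$ have already been proved for locally graded minimal non-$CC$-groups, so it suffices to recognize $G$ as such a group.

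First I would take $G$ to be a locally graded $MNCL$-group. By Theorem \ref{t:3}, every $MNCL$-group is an $MNCC$-group, so $G$ is an $MNCC$-group, that is, a minimal non-$CC$-group. The property of being locally graded is an intrinsic property of $G$ and is not affected by this reclassification, so $G$ is a \emph{locally graded} minimal non-$CC$-group.

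At this stage I would simply apply Theorem \ref{t:4} to $G$. It yields directly that $G$ is locally finite and countable, that $G=G^*=G'$, and in particular that $G$ is perfect, which are exactly the assertions of Theorem \ref{t:5}.

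The proof therefore rests entirely on the reduction carried out in the first step, and the only thing to verify is that Theorem \ref{t:3} applies with no extra hypotheses. Since that theorem is stated for \emph{all} $MNCL$-groups, there is no genuine obstacle here: the argument is a clean composition of the two cited theorems, and no further work on the internal structure of $G$ is required.
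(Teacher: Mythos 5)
Your proposal is correct and is exactly the paper's own argument: the paper's proof of Theorem \ref{t:5} simply combines Theorem \ref{t:3} (every $MNCL$-group is an $MNCC$-group) with Theorem \ref{t:4} (the structure of locally graded minimal non-$CC$-groups). Your extra observation that local gradedness is an intrinsic property unaffected by the reclassification is a sound and harmless elaboration of the same reasoning.
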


\begin{proof}
It is enough to combine Theorems \ref{t:3} and \ref{t:4}.
\end{proof}

The importance of Theorem \ref{t:5} is due to the fact that it describes a situation, which is completely different from that of $MNFL$-groups. In fact, \cite[Corollary 4.2]{zhang} states that there are no locally graded perfect $MNFL$-groups, while Theorem \ref{t:5} has just illustrated that all locally graded $MNCL$-groups are perfect. Two more properties of $MNCL$-groups are summarized below.

\begin{corollary}\label{c:6}
If $G$ is a locally graded $MNCL$-group, then
$G$ has no non-trivial finite factor groups. 
\end{corollary}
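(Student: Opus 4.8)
The plan is to read this off directly from Theorem \ref{t:5}, which guarantees that a locally graded $MNCL$-group $G$ coincides with its finite residual, $G = G^*$. The key observation is that the assertion ``$G$ has no non-trivial finite factor groups'' is merely a reformulation of the fact that $G$ possesses no proper normal subgroup of finite index. Thus the entire task reduces to unwinding the definition of $G^*$ as the intersection of all normal subgroups of finite index, and matching it against the equality $G = G^*$ already supplied by Theorem \ref{t:5}.

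Concretely, I would argue by contradiction. Suppose $G/N$ is a non-trivial finite factor group; then $N$ is a normal subgroup of $G$ of finite index with $N \neq G$. Since $G^*$ is by definition the intersection of all normal subgroups of finite index, the subgroup $N$ is one of the factors in this intersection, so $G^* \leq N$. Combining this with $G = G^*$ from Theorem \ref{t:5} yields $G = G^* \leq N < G$, a contradiction. Hence no such $N$ exists, and $G$ has no non-trivial finite factor groups.

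I expect no genuine obstacle in this argument: the substantive content has already been absorbed into Theorem \ref{t:5}, and the corollary is in effect a restatement of $G = G^*$ together with the definition of the finite residual. The one point deserving a word of caution is that one should \emph{not} attempt to invoke Lemma \ref{l:3bis} here, since that lemma requires $G$ itself to be a $CC$-group, whereas a locally graded $MNCL$-group is in fact an $MNCC$-group by Theorem \ref{t:3} and therefore is \emph{not} a $CC$-group. Accordingly, the route through the finite residual is the clean one, and it suffices to note that the hypotheses of Theorem \ref{t:5} (namely, $G$ locally graded and $MNCL$) are precisely those of the present corollary, so that the conclusion $G = G^* = G'$ is available without any further argument.
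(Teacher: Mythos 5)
Your proof is correct and is essentially the paper's own argument: the paper likewise deduces the corollary directly from Theorem \ref{t:5} via $G = G^*$, and your contradiction step merely spells out the definition of the finite residual that the paper leaves implicit. Your cautionary remark about Lemma \ref{l:3bis} is also accurate, since an $MNCL$-group is an $MNCC$-group by Theorem \ref{t:3} and hence not a $CC$-group.
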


\begin{proof}
From Theorem \ref{t:5}, $G=G^*$ implies that there are no non-trivial finite factor groups.
\end{proof}

\begin{corollary}\label{c:7}
Assume that $G$ is an $MNCL$-group. If $G$ is  locally
graded, then $G$ is not finitely generated.
\end{corollary}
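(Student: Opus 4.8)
The plan is to argue by contradiction: assume $G$ is finitely generated and derive that $G$ would then be a $CL$-group, contradicting the definition of an $MNCL$-group. The central input is Theorem \ref{t:5}, which applies precisely because $G$ is assumed locally graded. The whole argument is short once that structural information is in hand, so the work lies in assembling the right elementary facts rather than in any delicate estimate.

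First I would invoke Theorem \ref{t:5}: since $G$ is a locally graded $MNCL$-group, $G$ is locally finite (and countable, though only local finiteness is needed here). Next I would use the standard fact that a finitely generated locally finite group is finite; indeed, if $G=\langle g_1,\dots,g_k\rangle$, then $G$ is a finitely generated subgroup of itself and hence finite by local finiteness. Finally I would observe that every finite group is trivially a $CL$-group: for each $m$ the layer $G_m$ is a subgroup of the finite group $G$, hence finite, and a finite group is Chernikov. Thus $G_m$ is Chernikov for all $m$, so $G$ is a $CL$-group, contradicting the assumption that $G$ is an $MNCL$-group. Therefore $G$ cannot be finitely generated.

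The one point that must be handled with care is that one cannot shorten the argument by directly applying Lemma \ref{l:3bis}. That lemma presumes $G$ to be a $CC$-group, whereas by Theorem \ref{t:3} an $MNCL$-group is in fact an $MNCC$-group and so is \emph{not} a $CC$-group; its hypotheses are therefore vacuous for the groups at hand. For this reason the route through local finiteness supplied by Theorem \ref{t:5} is the correct one, and this is the only genuinely substantive step, since everything else is routine. There is no serious obstacle to overcome beyond recognizing that local gradedness is exactly the hypothesis needed to bring Theorem \ref{t:5} to bear.
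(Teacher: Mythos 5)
Your proof is correct, but it closes the contradiction differently from the paper. Both arguments rest on Theorem \ref{t:5}; you use the conclusion that $G$ is \emph{locally finite}, combine it with the elementary facts that a finitely generated locally finite group is finite and that a finite group is trivially a $CL$-group (all layers finite, hence Chernikov), contradicting the definition of an $MNCL$-group. The paper instead uses the other conclusion of Theorem \ref{t:5}, namely $G=G^*$: if $G$ were finitely generated, then local gradedness applied to $G$ itself would give a proper subgroup of finite index, whose normal core yields a non-trivial finite factor group, so $G^*\neq G$ --- a contradiction. (The paper's proof cites Theorem \ref{t:3} for the equality $G=G^*$, which is evidently a slip for Theorem \ref{t:5}, i.e.\ for Theorems \ref{t:3} and \ref{t:4} combined.) The two routes are of essentially the same length: yours trades the direct appeal to the definition of locally graded for the implication ``finitely generated and locally finite $\Rightarrow$ finite $\Rightarrow$ $CL$'', while the paper's version pinpoints exactly where finite quotients conflict with $G=G^*$. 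Your side remark about Lemma \ref{l:3bis} is also accurate: since Theorem \ref{t:3} shows an $MNCL$-group is never a $CC$-group, that lemma's hypotheses cannot be satisfied by the groups considered here, so it indeed cannot be used to shortcut the argument.
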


\begin{proof}  Assume that $G$ is a finitely generated locally graded $MNCL$-group. Then $G$ must have finite factor groups, against Theorem \ref{t:3}, which implies $G=G^*$. We conclude that $G$ cannot be finitely generated.  
\end{proof}

We end with two remarks which hide some deep open questions, related to the efforts in \cite{bruno-phillips,kp,leinen}.

\begin{remark}\label{r:1} From \cite[Remark]{zhang}, it is not known whether there exists a perfect 2-generated $MNFL$-group. Probably this is due to the fact that there are no examples of perfect $MNFL$-groups, which are not $MNFC$-groups.  Note that \cite[Theorem 4.1]{zhang} states that there are no locally finite perfect $MNFL$-groups. Then, if the desired examples exist, then they should be periodic but not locally finite. On another hand, the absence of such examples makes plausible that also the converse of \cite[Theorem 2.3]{zhang} would be true: one may expect that, not only all $MNFL$-groups are $MNFC$-groups, but that also the contrary is true. In fact, this is proved in the nonperfect case in \cite[Corollary 3.2]{zhang}.
\end{remark}

\begin{remark}\label{r:2} Similarly as in Remark \ref{r:1}, it is not known whether all $MNCC$-groups are $MNCL$-groups.
\end{remark}

\section*{Acknowledgements}
We are grateful to Professor Z. Zhang, who noted some weak points in the original version of the manuscript.

\end{document}